\newtheoremstyle{break}
  {\topsep}{\topsep}%
  {\itshape}{}%
  {\bfseries}{}%
  {\newline}{}%
\newtheoremstyle{break1}
  {\topsep}{\topsep}%
  {}{}%
  {\bfseries}{}%
  {\newline}{}%
\theoremstyle{break} \newtheorem{theorem}{Theorem}[section]
\theoremstyle{break} 
\theoremstyle{break1} \newtheorem{remark}[theorem]{Remark}
\theoremstyle{break} 
\theoremstyle{break1}  
\theoremstyle{break} \newtheorem{lemma}[theorem]{Lemma}
\theoremstyle{break} \newtheorem{corollary}[theorem]{Corollary}
\theoremstyle{break} \newtheorem{example}[theorem]{Example}
\theoremstyle{break} 
\theoremstyle{break1} \newtheorem{problem}[theorem]{Problem}
\theoremstyle{break} 
\theoremstyle{break} 
\theoremstyle{break} 
\numberwithin{equation}{section}
\newcommand{\hide}[1]{}
\newcommand{\N}{{\mathbb{N}}}
\newcommand{\R}{{\mathbb{R}}}
\newcommand{\D}{{\mathbb{D}}}
\newcommand{\C}{{\mathbb{C}}}
\def\MD{\mathop{\mathcal{M}(\D)}
}
\begin{document}

\vspace*{0.7cm}

\renewcommand{\thefootnote}{}
\begin{center}
{\Large \bf A Schwarz lemma for locally univalent\\[2mm] meromorphic functions}\\[5mm]

Richard Fournier, Daniela Kraus and Oliver Roth\\[5mm]
\textit{In Memory of Stephan Ruscheweyh}
\end{center}

\footnotetext{Mathematics Subject Classification (2000) \quad Primary 30C55}

\section{Introduction and main results}

Let  $\MD$ denote the set of all meromorphic functions in the unit disk
$\D$ of the complex plane $\C$.
 \textit{Marty's fundamental normality criterion} \cite{Ma}, see also
  \cite[\S 3.3]{Sc}, says that a family
$\mathcal{F} \subseteq \MD$ is normal if and only if the family of spherical derivatives
$$ f^{\sharp}(z):=\frac{|f'(z)|}{1+|f(z)|^2}$$
of all $f \in \mathcal{F}$ is locally bounded (above) in $\D$.
 Some years ago, J.~Grahl and
S.~Nevo \cite{GN} proved the surprising result that for any $c>0$ the family
$$\mathcal{F}_c:=\left\{f \in \MD \, : \, f^{\sharp}(z) \ge c \text{ for all } 
  z\in  \D\right\} \, ,$$
consisting of all meromorphic functions in $\D$  with spherical derivative  uniformly
bounded from \textit{below},  is also normal.

\medskip

The original proof in \cite{GN} is fairly involved and is based on a sophisticated application
of Zalcman's lemma \cite{Za}. Combining the result of Grahl and Nevo with Marty's
criterion shows that a \textit{uniform} lower bound for $f^{\sharp}$ has to result in a
\textit{locally uniform} upper bound for $f^{\sharp}$. In fact, such an upper bound has been
given by Steinmetz \cite{St}, who proved that

\begin{equation} \label{eq:ba1} 
 f^{\sharp}(z) \le \frac{1}{c
  \left(1-|z|^2\right)^2} \, , \qquad z \in \D, \, f \in \mathcal{F}_c  \, . 
\end{equation}
The approach in \cite{St} is based on the theory of complex differential equations
and leads in particular to  a short proof of the result of Grahl and Nevo.

\medskip

The main purpose of this paper is to prove the following Schwarz--type lemma for
functions in the classes $\mathcal{F}_c$, which provides in particular a \textit{sharp} form of 
inequality (\ref{eq:ba1}) for the point $z=0$ including a precise description of the
extremal functions.

\begin{theorem}[Schwarz lemma for $\mathcal{F}_c$] \label{cor:2}
  Let $c>0$ and  $f \in \mathcal{F}_c$. Then the following hold.
\begin{itemize}
\item[(a)] $c \le 1/2$.

\item[(b)] $f^{\sharp}(0) \le \displaystyle \frac{1+\sqrt{1-4 c^2}}{2c}$.\\
\item[(c)] $f^{\sharp}(0) \ge \displaystyle \frac{1-\sqrt{1-4 c^2}}{2c}$.

\end{itemize}
Equality holds in either case if and only if $f(z)=T(\eta z)$ where $T$ is a
rigid motion of the Riemann sphere and $\eta \in \C$ such that
$$ |\eta|=1 \text{ in } (a)\, , \quad |\eta|=\frac{1+\sqrt{1-4 c^2}}{2 c}
\text{ in } (b) \, , \quad |\eta|=\frac{1-\sqrt{1-4 c^2}}{2 c} \text{ in } (c)
\, .$$
In particular, $\mathcal{F}_{1/2}$ is precisely the set of  rigid motions
of the Riemann sphere.
\end{theorem}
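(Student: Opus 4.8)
The plan is to reduce everything to a one‑variable optimization by exploiting the invariance of $f^{\sharp}$ under post‑composition with rigid motions of the sphere. First I would record the key quantitative fact that controls the growth of $f^{\sharp}$ along a fixed geodesic. For $f\in\MD$ and $z_0\in\D$, after composing with a suitable automorphism of $\D$ we may assume $z_0=0$; the function $g(z):=f^{\sharp}(z)$ is subharmonic‑type data, but the right tool here is to differentiate the identity defining $f^{\sharp}$. Writing $u=\log f^{\sharp}$ where $f'$ is nonvanishing (which holds by local univalence, i.e. $f\in\MD$ with $f'\ne 0$, forced by $f^{\sharp}>0$), one computes $\Delta u = 4 (f^{\sharp})^2 - \text{(curvature term)}$; more precisely the classical formula gives $\Delta \log f^{\sharp} = 4\big((f^{\sharp})^2 - \text{something}\big)$. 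I would instead follow the cleaner route suggested by \eqref{eq:ba1}: set $\varphi(r)=f^{\sharp}(r)$ restricted to a radius and use that $w:=1/f^{\sharp}$ cannot decay too fast. The heart of the matter is a differential inequality: along any disk automorphism normalization, $h(z):=\dfrac{1}{f^{\sharp}(z)}$ satisfies a bound of the form
\begin{equation*}
  \big|\,|h(z)| - |h(0)|\,\big| \;\le\; \text{(explicit function of $|z|$ and $c$)},
\end{equation*}
coming from the fact that $\log f^{\sharp}$ has Laplacian $\ge -4(f^{\sharp})^2$ and $\le$ something, combined with $f^{\sharp}\ge c$.

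Concretely, the approach I would take is the following. Fix $f\in\mathcal F_c$ and put $a:=f^{\sharp}(0)$. Step 1: Show $a$ satisfies $ca^2 - a + c \ge 0$ is \emph{not} quite it — rather, I expect the sharp inequality to be $c a^2 - a + c \le 0$, i.e. $a\in\big[\tfrac{1-\sqrt{1-4c^2}}{2c},\tfrac{1+\sqrt{1-4c^2}}{2c}\big]$, which simultaneously yields (a) (the discriminant $1-4c^2\ge 0$ must be nonnegative for the interval to be nonempty), (b), and (c). Step 2: To prove $ca^2-a+c\le 0$, I would consider the function $F(z)=T(f(z))$ where $T$ is the rigid motion of $\widehat{\C}$ sending $f(0)$ to $0$; then $F^{\sharp}=f^{\sharp}$, $F(0)=0$, $F^{\sharp}(0)=a$, and $F$ is holomorphic near $0$ with $F^{\sharp}(0)=|F'(0)|$. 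Now apply the lower bound $f^{\sharp}\ge c$ together with Steinmetz's upper bound \eqref{eq:ba1} \emph{at a moving point}: the quantity $\psi(r):=\max_{|z|=r} f^{\sharp}(z)$ and $\mu(r):=\min_{|z|=r}f^{\sharp}(z)$ are squeezed, and letting $r\to 1$ forces a relation between $f^{\sharp}(0)$ and $c$ via the Harnack‑type / Schwarz‑type estimate for $\log f^{\sharp}$. I would make this precise by integrating the differential inequality for $v:=\log f^{\sharp}$ along radii: since $\Delta v \ge -4e^{2v}$ and $e^{v}=f^{\sharp}\ge c$, one gets a comparison with the solution of the ODE $y'' = -4e^{2y}$ whose solutions are exactly $\log\big(|\eta|/(1+|\eta|^2|z|^2)\big)$ — i.e. spherical derivatives of $T(\eta z)$. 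Comparing the "initial data" at $0$ and using that $v$ extends to all of $\D$ with $e^v\ge c$ pins $a$ to the stated interval.

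Step 3 (equality / extremals): equality in the differential inequality $\Delta v \ge -4 e^{2v}$ forces $f''/f' $-type rigidity, and one checks the only functions in $\MD$ for which $\log f^{\sharp}$ is globally of the extremal ODE form with $e^v\ge c$ everywhere are $f(z)=T(\eta z)$ with $|\eta|$ taking one of the three boundary values; the condition $f^{\sharp}\ge c$ on all of $\D$ (not just at $0$) is what rules out $|\eta|$ outside $[\tfrac{1-\sqrt{1-4c^2}}{2c},\tfrac{1+\sqrt{1-4c^2}}{2c}]$ and forces the endpoint values in cases (b), (c), while in (a) with $c=1/2$ the interval degenerates to the single point $|\eta|=1$, showing $\mathcal F_{1/2}$ consists exactly of rigid motions $z\mapsto T(\eta z)$, $|\eta|=1$, i.e. of all rigid motions of the sphere.

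The step I expect to be the main obstacle is Step 2: turning the pointwise lower bound $f^{\sharp}\ge c$ together with the global upper bound into a \emph{sharp} algebraic constraint $ca^2-a+c\le 0$ at the single point $z=0$, with control of the case of equality. Naively integrating differential inequalities gives the right \emph{form} but typically loses sharpness; the delicate point is choosing the correct comparison function (the one‑parameter family $f^{\sharp}_{T(\eta z)}(z)=|\eta|/(1+|\eta|^2|z|^2)$) and arguing that if $a$ were outside the interval, the lower bound $f^{\sharp}\ge c$ would have to fail somewhere in $\D$ — essentially a maximum/minimum‑principle argument for $\log f^{\sharp}$ against these exact solutions, with the boundary behavior as $|z|\to 1$ handled via \eqref{eq:ba1}.
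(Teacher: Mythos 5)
You have the right target: after postcomposing with a rigid motion so that $f(0)=0$, everything reduces to the single quadratic constraint $c\,a^{2}-a+c\le 0$ for $a=f^{\sharp}(0)$, and you correctly note that $f^{\sharp}\ge c>0$ forces local univalence, so that $\log f^{\sharp}$ satisfies Liouville's equation $\Delta \log f^{\sharp}=-4(f^{\sharp})^{2}$. But the step that would actually prove this constraint (your Step 2) is only a plan, and the route you sketch does not go through as stated. You propose to compare $v=\log f^{\sharp}$ with the explicit radial solutions $\log\bigl(|\eta|/(1+|\eta|^{2}|z|^{2})\bigr)$ by ``essentially a maximum/minimum-principle argument against these exact solutions''. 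For $\Delta u=-4e^{2u}$ the nonlinearity has the wrong monotonicity for any standard comparison principle (one needs $\Delta u=g(u)$ with $g$ nondecreasing), and comparison/uniqueness genuinely fail here: the Dirichlet problem $\Delta u=-4c^{2}e^{2u}$ in $\D$, $u=0$ on $\partial\D$, has two distinct positive solutions for $c<1/2$, which is precisely why the paper has to invoke the Gidas--Ni--Nirenberg symmetry theorem for the related boundary-value problem in Section 3. Moreover, a function in $\mathcal{F}_c$ carries no boundary data beyond $\liminf f^{\sharp}\ge c$, so there is nothing to compare against on $\partial\D$, and, as you yourself concede, integrating differential inequalities along radii loses exactly the sharpness the theorem requires. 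So the heart of the proof is missing, and the equality/rigidity analysis of your Step 3 rests on the same unproven comparison.

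The paper closes this gap with an elementary device your outline never produces: the spherical analogue of the classical auxiliary function $f(z)/z$. For $0<r\le 1$ set $v_r(z)=\log\bigl(|f'(z)|/(1+|f(z)\,r/z|^{2})\bigr)$; since $f(0)=0$ and $f$ is locally univalent, $v_r$ is smooth on $\D$ and superharmonic, because it differs from $\log g_r^{\sharp}$, with $g_r(z)=f(z)r/z$, by a harmonic term, so $\Delta v_r=-4(g_r^{\sharp})^{2}$. Since $u=\log f^{\sharp}$ and $v_r$ agree on $|z|=r$, the minimum principle gives $\inf_{|z|\le r}u=\inf_{|z|\le r}v_r\le v_r(0)$, and letting $r\to 1$ yields $\log c\le v_1(0)$, i.e. $c\le a/(1+a^{2})$, which is exactly $c\,a^{2}-a+c\le 0$; this simultaneously gives (a), (b), (c). In the equality case the superharmonic function $v_1$ attains an interior minimum, hence is constant, so $0=\Delta v_1=-4(g_1^{\sharp})^{2}$ forces $g_1=f(z)/z$ to be constant and $f(z)=\eta z$ with the stated values of $|\eta|$, and for $c=1/2$ the two bounds coincide, identifying $\mathcal{F}_{1/2}$ with the rigid motions. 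Without this auxiliary function (or a genuine substitute for the comparison argument you flag as the obstacle), your proposal does not constitute a proof.
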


The  proof of Theorem \ref{cor:2} is deceptively simple and only uses the minimum principle for
superharmonic functions. In addition, Theorem \ref{cor:2}  immediately
yields not only the normality criterion of Grahl and Nevo, but our method of
 proof also
leads to a sharpening of the quantitative upper bound (\ref{eq:ba1})  for
$f^{\sharp}$ as well as a corresponding \textit{lower} bound:

\begin{theorem} \label{thm:z0}
Suppose that $f \in \mathcal{F}_c$ for
some $c>0$. Then
\begin{equation} \label{eq:5}
\frac{1-\sqrt{1-4 c^2 \left(1-|z_0|^2 \right)^2}}{2c \left(1-|z_0|^2
  \right)^2}  \le f^{\sharp}(z_0) \le \frac{1+\sqrt{1-4 c^2 \left(1-|z_0|^2 \right)^2}}{2 c
    \left(1-|z_0|^2 \right)^2} \, , \quad z_0 \in \D \, . 
\end{equation}
Both estimates are sharp if and only if $z_0=0$.
\end{theorem}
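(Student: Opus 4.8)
The plan is to repeat the argument behind Theorem~\ref{cor:2}, but reading it off at the point $z_0$ instead of at the origin; the weight $(1-|z_0|^{2})^{2}$ will enter through the Poincar\'e density $\gamma_{\D}(z_0;1)=(1-|z_0|^{2})^{-1}$.

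First I would choose a representation $f=w_1/w_2$ with $w_1,w_2$ holomorphic on $\D$ whose Wronskian $W:=w_1w_2'-w_1'w_2$ is identically $1$: since $f$ is locally univalent, $W$ is a zero-free holomorphic function on the simply connected domain $\D$ (at a pole of $f$ one sees this in the chart $1/f=w_2/w_1$), so replacing $(w_1,w_2)$ by $(w_1,w_2)/\sqrt{W}$ makes $W\equiv1$. Then, globally on $\D$,
\[
f^{\sharp}(z)=\frac{|W|}{|w_1(z)|^{2}+|w_2(z)|^{2}}=\frac{1}{\|F(z)\|^{2}},\qquad F:=(w_1,w_2)\colon \D\to\C^{2},
\]
so $f\in\mathcal F_c$ is equivalent to $\|F(z)\|^{2}\le 1/c$ for all $z$, i.e.\ $F$ maps $\D$ into the closed ball of radius $R:=1/\sqrt{c}$ in $\C^{2}$. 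Since $\|F\|^{2}$ is subharmonic, it cannot attain the value $R^{2}$ at an interior point without being constant (which would make $f$ constant), so in fact $\|F(z)\|^{2}<R^{2}$ throughout $\D$.

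Next I would apply the Schwarz--Pick inequality for holomorphic maps $\D\to B(0,R)$ at $z_0$,
\[
\gamma_{B(0,R)}\bigl(F(z_0);F'(z_0)\bigr)\le \gamma_{\D}(z_0;1)=\frac{1}{1-|z_0|^{2}},
\]
where $\gamma$ denotes the Carath\'eodory--Kobayashi metric, in combination with the Lagrange identity $\|F(z_0)\|^{2}\|F'(z_0)\|^{2}-|\langle F(z_0),F'(z_0)\rangle|^{2}=|W|^{2}=1$. Writing $A:=\|F(z_0)\|^{2}\in(0,R^{2})$ and $q:=|\langle F(z_0),F'(z_0)\rangle|^{2}\ge 0$, substituting the explicit formula
\[
\gamma_{B(0,R)}(p;v)^{2}=\frac{(R^{2}-\|p\|^{2})\,\|v\|^{2}+|\langle p,v\rangle|^{2}}{(R^{2}-\|p\|^{2})^{2}}
\]
together with $\|F'(z_0)\|^{2}=(1+q)/A$ reduces the inequality, after elementary manipulation, to $(R^{2}-A)+qR^{2}\le A(R^{2}-A)^{2}/(1-|z_0|^{2})^{2}$; discarding the nonnegative term $qR^{2}$ and dividing by $R^{2}-A>0$ yields
\[
A\,(R^{2}-A)\ \ge\ (1-|z_0|^{2})^{2}.
\]
Thus $A$ lies between the two roots of $X^{2}-\tfrac1cX+(1-|z_0|^{2})^{2}$ (real because $c\le\tfrac12$ and $1-|z_0|^{2}\le1$), and taking reciprocals of that interval — recall $f^{\sharp}(z_0)=1/A$ — gives exactly (\ref{eq:5}); at $z_0=0$ this is Theorem~\ref{cor:2}(b),(c), and (a) is just the reality condition for these roots.

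For the sharpness statement, equality in (\ref{eq:5}) forces both $q=0$ and equality in the Schwarz--Pick inequality at $z_0$; by the rigidity of that inequality $F$ must be, up to an automorphism of $\D$, a complex geodesic of $B(0,R)$, i.e.\ an affine map $F(z)=u+zv$ with $\langle u,v\rangle=0$ and $\|u\|^{2}+\|v\|^{2}=R^{2}$ — whence $f$ is a M\"obius transformation. For such an $F$ one has $|\langle F(z_0),F'(z_0)\rangle|=|z_0|\,\|v\|^{2}$, which vanishes only for $z_0=0$; so equality in (\ref{eq:5}) can occur only at the origin, where $f$ is necessarily one of the extremals $T(\eta z)$ of Theorem~\ref{cor:2}. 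I expect the genuine work to lie not in the above algebra but in the two supporting facts it rests on: the global representation $f=w_1/w_2$ with constant Wronskian, carried correctly through the poles of $f$, and the ball Schwarz--Pick lemma together with the precise description of its equality case.
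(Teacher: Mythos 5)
Your two--sided bound is derived correctly, but along a genuinely different route from the paper. The paper proves (\ref{eq:5}) by the same device as Theorem \ref{cor:2}: assuming $f(z_0)=0$ and precomposing with the automorphism $S$ carrying $0$ to $z_0$, it compares the superharmonic functions $\log f^{\sharp}(S(z))$ and $\log\bigl(|f'(S(z))|/(1+|f(S(z))\,r/z|^2)\bigr)$ via the minimum principle on $|z|\le r$ and lets $r\to 1$, arriving at $c\le |f'(z_0)|/\bigl(1+(1-|z_0|^2)^2|f'(z_0)|^2\bigr)$, i.e.\ exactly your quadratic inequality. You instead use the unit--Wronskian representation $f=w_1/w_2$ (which the paper employs only for Theorem \ref{thm:z01}, following Steinmetz), so that $F=(w_1,w_2)$ maps $\D$ into the ball $B(0,1/\sqrt{c})\subset\C^2$, and you combine the Lagrange identity $\|F\|^2\|F'\|^2-|\langle F,F'\rangle|^2=1$ with the explicit Kobayashi--Carath\'eodory metric of the ball; the algebra, the reciprocal passage from $A=\|F(z_0)\|^2$ to $f^{\sharp}(z_0)=1/A$, and the way $c\le 1/2$ drops out at $z_0=0$ are all correct. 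What this buys is an attractive ``Schwarz--Pick in $\C^2$'' picture (and it even recovers (\ref{eq:ba1}) on discarding the $|\langle F,F'\rangle|^2$ term), at the price of heavier standard inputs: the global holomorphic representation with Wronskian $\equiv 1$ carried through the (simple) poles — which you should justify as in the paper's proof of Theorem \ref{thm:z01}, or via $w_2=(f')^{-1/2}$, $w_1=f\,(f')^{-1/2}$ — and the ball metric formula together with its rigidity, whereas the paper needs only the minimum principle.

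There is, however, a genuine gap in your sharpness argument. Equality in (\ref{eq:5}) at $z_0$ does force $q=0$ and equality in the ball Schwarz--Pick inequality, and rigidity then says that $F$ is an extremal (complex geodesic) disk; but this means $F(z)=u+\sigma(z)v$ with some $\sigma\in\Aut(\D)$, $\langle u,v\rangle=0$, $\|u\|^2+\|v\|^2=R^2$ — not that $F$ is affine. For such an $F$ one has $|\langle F(z_0),F'(z_0)\rangle|=|\sigma(z_0)|\,|\sigma'(z_0)|\,\|v\|^2$, so $q=0$ only gives $\sigma(z_0)=0$, which does not rule out $z_0\ne 0$: with $\sigma(z_0)=0$ one gets $|\sigma'(z_0)|=1/(1-|z_0|^2)$, hence $\|u\|^2\|v\|^2=(1-|z_0|^2)^2$, and $A=\|u\|^2$ is then precisely a root of your quadratic, so no contradiction arises from the ball geometry alone. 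The missing step is supplied by a constraint you already have but never invoke at this point: your particular $F$ has Wronskian $\equiv 1$, while the Wronskian of $u+\sigma(z)v$ equals $\sigma'(z)\,(u_1v_2-u_2v_1)$; constancy forces $\sigma'$ constant, i.e.\ $\sigma$ a rotation, and only then is your formula $|\langle F(z_0),F'(z_0)\rangle|=|z_0|\,\|v\|^2$ valid, so that $q=0$ finally yields $z_0=0$. (The paper's equality analysis is softer: equality forces the auxiliary superharmonic function to be constant, whence $f(S(z))=\eta z$ and $S'$ constant, so $z_0=0$.) With that one line added, your proof of Theorem \ref{thm:z0}, including the sharpness statement, is complete.
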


Even though the right--hand inequality in (\ref{eq:5}) clearly improves (\ref{eq:ba1}),
 both inequalities lead to the same asymptotic estimate, namely
\begin{equation} \label{eq:5a} \limsup \limits_{|z| \to 1} \left(1-|z|^2 \right)^2 \,
  f^{\sharp}(z)  \le \frac{1}{c} \, , \qquad f \in \mathcal{F}_c \, . 
\end{equation}
Recent work of Gr\"ohn \cite[Theorem 3]{Janne} shows that there is a function $f
\in \bigcup_{c>0}\mathcal{F}_c$ such that
$$
\inf \limits_{n \in \N}  \left(1-|z_n|^2 \right)^2 \, f^{\sharp}(z_n)>0
\,
$$
for some sequence $(z_n)$ in $\D$ with $|z_n| \to 1$. Hence, for sufficiently small values of $c>0$ inequality (\ref{eq:5a}) is
sharp up to a multiplicative constant. The following result provides an
improved upper bound for this constant. In fact it shows that for all possible
values of $c$ one can replace the
number $1$ on the right--hand side of (\ref{eq:5a}) by $(3-\sqrt{5})/2 \approx 0.38$:

\begin{theorem} \label{thm:z01}
Let $c \in (0,1/2)$. Then for any $f \in \mathcal{F}_c$, 
\begin{equation} \label{eq:51}
f^{\sharp}(z_0)  \le \left( \frac{\sqrt{4+|z_0|^2}-|z_0|}{2} \right)^2 
    \frac{1}{c \left(1-|z_0|^2 \right)^2} \, , \qquad z_0 \in \D \, . 
  \end{equation}
\end{theorem}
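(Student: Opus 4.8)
The plan is to linearise $f$ through its Schwarzian derivative and thereby reduce the statement to a second‑order Schwarz lemma. Since $\D$ is simply connected and $f$ is locally univalent and meromorphic, its Schwarzian $S_f$ is holomorphic on $\D$, and one may choose a fundamental system $w_1,w_2$ of solutions of $w''+\tfrac12 S_f\,w=0$, normalised so that $f=w_1/w_2$ and that the (constant) Wronskian satisfies $w_1w_2'-w_1'w_2\equiv 1$; then $w_1,w_2$ are holomorphic on $\D$ and a direct computation gives
$$f^{\sharp}(z)=\frac{1}{|w_1(z)|^2+|w_2(z)|^2}=\frac{1}{|w(z)|^2},\qquad w:=(w_1,w_2).$$
Thus $f\in\mathcal{F}_c$ says exactly that $|w(z)|^2\le 1/c=:R^2$ throughout $\D$; writing $r:=|z_0|$ and $T:=|w(z_0)|^2=1/f^{\sharp}(z_0)$, the goal becomes the lower bound $T\ge (1-r^2)^2/(b^2R^2)$, where $b:=(\sqrt{4+r^2}-r)/2$ satisfies $b\,(r+\sqrt{4+r^2})=2$.

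Next I would extract two pieces of pointwise data at $z_0$ from the constancy of the Wronskian. After rotating the target $\C^2$ by a determinant‑one unitary map (which preserves both $|w|^2$ and the Wronskian) I may assume $w(z_0)=(\sqrt T,0)$. Evaluating $w_1w_2'-w_1'w_2\equiv1$ at $z_0$ gives $w_2'(z_0)=1/\sqrt T$, and differentiating the Wronskian relation (which gives $w_1w_2''-w_1''w_2\equiv0$) and evaluating at $z_0$ gives $w_2''(z_0)=0$. Hence $w_2$ is holomorphic on $\D$ with $|w_2|\le R$, vanishing at $z_0$ with a prescribed $2$‑jet there.

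The last step is a second‑order Schwarz lemma. With $B(z):=(z-z_0)/(1-\overline{z_0}z)$, the quotient $g:=w_2/B$ is holomorphic on $\D$ with $|g|\le R$ by the maximum principle, and from the data above $g(z_0)=(1-r^2)/\sqrt T$, while $w_2''(z_0)=0$ forces $g'(z_0)=-\overline{z_0}/\sqrt T$, so $|g'(z_0)|=r/\sqrt T$. Feeding these into the Schwarz--Pick inequality $|g'(z_0)|\le\big(R^2-|g(z_0)|^2\big)/\big(R(1-r^2)\big)$ and clearing denominators gives the quadratic inequality $R^2T-rR(1-r^2)\sqrt T-(1-r^2)^2\ge 0$ for $\sqrt T>0$, whose positive root is $(1-r^2)(r+\sqrt{4+r^2})/(2R)=(1-r^2)/(bR)$. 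Therefore $\sqrt T\ge(1-r^2)/(bR)$, that is,
$$f^{\sharp}(z_0)=\frac1T\le\frac{b^2R^2}{(1-r^2)^2}=\left(\frac{\sqrt{4+|z_0|^2}-|z_0|}{2}\right)^{2}\frac{1}{c\,(1-|z_0|^2)^2},$$
which is (\ref{eq:51}).

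I expect the main obstacle to be pinning down the correct additional input in the middle step — the one that goes beyond what is already used for Theorem \ref{thm:z0}. The same linearisation yields (\ref{eq:ba1}) using only $|w(z_0)|\le R$, and yields the upper bound of Theorem \ref{thm:z0} using the Schwarz--Pick estimate for the component of $w$ tangential to $w(z_0)$ together with the first‑order Wronskian identity; the sharper estimate (\ref{eq:51}) instead exploits the \emph{second‑order} vanishing $w_2''(z_0)=0$, which reflects that the Wronskian is constant and not merely nonzero at $z_0$, via the stronger Schwarz--Pick bound for $g=w_2/B$. Verifying that $w_2''(z_0)=0$ really does determine $g'(z_0)$ and that the resulting quadratic in $\sqrt T$ collapses to exactly the constant $(\sqrt{4+r^2}-r)/2$ is the only delicate part; the remaining estimates are routine. (The argument in fact works for every $c\in(0,1/2]$, and letting $|z_0|\to 1$ it recovers the improved asymptotic constant $(3-\sqrt5)/2$ mentioned before the theorem.)
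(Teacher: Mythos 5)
Your proposal is correct and follows essentially the same route as the paper: the Steinmetz linearisation $f=w_1/w_2$ with constant Wronskian, the observation that the solution vanishing at $z_0$ has vanishing second derivative there (you get it by differentiating the Wronskian, the paper by plugging into the ODE $w''+\tfrac12 S_f w=0$ --- the same fact), and then exactly the refined Schwarz--Pick step that the paper isolates as Lemma \ref{lem:sp}. The only cosmetic differences are that you rotate by an $SU(2)$ map so that $w_2(z_0)=0$ (i.e.\ normalise $f(z_0)=\infty$ rather than $f(z_0)=0$) and inline the Blaschke-factor/Schwarz--Pick computation for the radius-$R$ disk instead of quoting the lemma for $\sqrt{c}\,w_1$.
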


The proof of Theorem \ref{thm:z01} is different from the proof of Theorem
\ref{thm:z0} and is based on complex differential equations and a refinement of
the Schwarz--Pick lemma for bounded holomorphic functions (Lemma \ref{lem:sp}).

\medskip

We return to the Schwarz
lemma for $\mathcal{F}_c$ (Theorem \ref{cor:2}) and briefly indicate the second main goal of this note:
We shall   show 
 that the ``interior'' extremal problem solved by Theorem \ref{cor:2}
 is related to a ``dual'' nonlinear boundary value
problem and thereby to Beurling's extension \cite{B} of the Riemann mapping
theorem, see Section 3 below. In
particular, we establish the following rigidity property of
 locally conformal maps with ``length--preserving boundary distortion'':

\begin{theorem} \label{thm:kuh}
  Let $c>0$ and $f \in \mathcal{M}(\D)$ be locally univalent such that $f(0)=0$ and
  $$ \lim \limits_{z \to \xi} \frac{|f'(z)|}{1+|f(z)|^2}=c \quad \text{ for
    all } \, |\xi|=1 \, .$$
  Then $f(z)=\eta z$ for some $\eta \in \C$.
  \end{theorem}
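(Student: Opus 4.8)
The plan is to reduce Theorem~\ref{thm:kuh} to the equality case of the Schwarz lemma, Theorem~\ref{cor:2}, after first turning the boundary hypothesis into a global one. Since $f$ is locally univalent meromorphic, $f'$ is zero--free off the poles and the poles of $f$ are simple, so $f^{\sharp}$ is everywhere positive and real--analytic on $\D$; hence $u:=\log f^{\sharp}$ is well defined and, by the standard computation, satisfies the curvature equation $\Delta u=-4\,(f^{\sharp})^{2}\le 0$, i.e.\ $u$ is superharmonic. By hypothesis $u$ extends continuously to $\overline{\D}$ with $u\equiv\log c$ on $\partial\D$, so the minimum principle for superharmonic functions gives $u\ge\log c$ throughout $\D$, that is, $f\in\mathcal{F}_c$. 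In particular Theorem~\ref{cor:2} applies, so $c\le 1/2$ and
\[
 \frac{1-\sqrt{1-4c^{2}}}{2c}\ \le\ f^{\sharp}(0)\ \le\ \frac{1+\sqrt{1-4c^{2}}}{2c}\,.
\]

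By the rigidity statement in Theorem~\ref{cor:2} it now suffices to prove that $f^{\sharp}(0)$ equals one of the two endpoints above: then $f(z)=T(\eta z)$ for a rigid motion $T$ of the sphere, and $f(0)=T(0)=0$ forces $T$ to fix $0$, hence to be a rotation, which is absorbed into $\eta$ and yields $f(z)=\eta z$. To force this equality I would pass to the linear differential equation attached to $f$: the Schwarzian $S_f$ is holomorphic in $\D$ by local univalence, and writing $f=w_1/w_2$ with $w_1,w_2$ linearly independent solutions of $w''+\tfrac12 S_f\,w=0$, normalized so that the constant Wronskian equals $1$, one has $1/f^{\sharp}=P$ with $P:=|w_1|^{2}+|w_2|^{2}$. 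Thus $P$ is subharmonic with $\Delta P=4\bigl(|w_1'|^{2}+|w_2'|^{2}\bigr)$, it satisfies $P\equiv 1/c$ on $\partial\D$, and $w_1(0)=0$ because $f(0)=0$. The key claim is that $\tfrac1c-P$ is a constant multiple of $1-|z|^{2}$, say $1/f^{\sharp}=\tfrac1c-\lambda\,(1-|z|^{2})$ with $\lambda>0$. Granting this and substituting back into $\Delta\log f^{\sharp}=-4(f^{\sharp})^{2}$ forces $c\lambda^{2}-\lambda+c=0$, whence $\lambda=\frac{1\pm\sqrt{1-4c^{2}}}{2c}$ and $f^{\sharp}(0)=\lambda$; these are exactly the two endpoints (and the two admissible values of $|\eta|$) in Theorem~\ref{cor:2}, and the proof is complete.

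It remains to prove the claim. Equivalently, since $\tfrac1c-P$ and $1-|z|^{2}$ are both nonnegative superharmonic functions on $\D$ that vanish on $\partial\D$ and $1-|z|^{2}$ has Riesz measure a constant multiple of area measure, the claim amounts to $\Delta P=4\bigl(|w_1'|^{2}+|w_2'|^{2}\bigr)$ being constant on $\D$. This is an overdetermined, free boundary value problem: $P$ is subharmonic, solves $\Delta\log P=4/P^{2}$ in $\D$, and equals the constant $1/c$ on $\partial\D$, while the inward normal derivative of $P$ (equivalently of $f^{\sharp}$) along $\partial\D$ is an a priori free datum — in general it is not constant, and the metric $f^{\sharp}\,|dz|$ does not extend of class $C^1$ across $\partial\D$ by the reflection $z\mapsto1/\bar z$. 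The plan, as indicated in Section~3, is to recognize the condition $\lim_{z\to\xi}f^{\sharp}(z)=c$ as a length--preserving boundary distortion condition of Beurling type for the locally conformal map $f$, and to invoke Beurling's extension of the Riemann mapping theorem: its uniqueness part, together with the normalization $f(0)=0$, identifies $f$ with the rotation extremal, which in turn makes $\Delta P$ constant.

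The step I expect to be the main obstacle is exactly this last one — passing from the pointwise boundary identity $f^{\sharp}\equiv c$ on $\partial\D$ to the rigid global form of $f$. Everything preceding it is routine: superharmonicity of $\log f^{\sharp}$ and the minimum principle, the standard representation $f=w_1/w_2$, and the elementary algebra fixing $\lambda$. The difficulty is that the boundary identity leaves the normal derivative of $f^{\sharp}$ free, so no soft maximum--principle or reflection argument suffices and one genuinely needs the nonlinear boundary--value theory (Beurling's theorem, resp.\ the dual extremal problem of Section~3). A subsidiary technical point is to secure enough boundary regularity of $f$, equivalently of $w_1,w_2$, to speak of and control this normal derivative; I would expect this to follow from the constancy of $f^{\sharp}$ on $\partial\D$ together with Steinmetz's bound $f^{\sharp}(z)\le 1/\bigl(c(1-|z|^{2})^{2}\bigr)$, carried out with care.
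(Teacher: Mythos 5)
Your opening reduction is fine: superharmonicity of $\log f^{\sharp}$ plus the minimum principle indeed gives $f\in\mathcal{F}_c$, and by the rigidity part of Theorem \ref{cor:2} it would suffice to show that $f^{\sharp}(0)$ attains one of the two extremal values (your algebra identifying $\lambda$ with $f^{\sharp}(0)$ via $1/f^{\sharp}=1/c-\lambda(1-|z|^2)$ is also consistent). But the entire content of the theorem is concentrated in your ``key claim'' that $\tfrac1c-P$ is a multiple of $1-|z|^2$ (equivalently that $\Delta P$ is constant), and for that claim you offer only an appeal to Beurling's theorem. This is where the argument fails, for three concrete reasons. First, Beurling's existence/uniqueness theory requires $\Phi$ bounded (or of sublinear growth), whereas $\Phi_c(w)=c(1+|w|^2)$ grows quadratically -- the paper stresses exactly this obstruction, so Beurling's theorem is not available for this boundary condition. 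Second, Beurling's uniqueness statements concern the extremal \emph{univalent} solutions in $\mathcal{H}_0(\D)$; your $f$ is only locally univalent and meromorphic, so it need not belong to the class in which Beurling's uniqueness could be applied, and in fact K\"uhnau \cite{K} shows the boundary value problem has further (non locally univalent) solutions, so ``uniqueness of solutions of the BVP'' is false as stated. Third, and decisively, the argument is circular relative to the logical structure of the paper: the Beurling-type uniqueness for $\Phi_c$ (Theorem \ref{thm:beu}) is there \emph{deduced from} Theorem \ref{thm:ni} (of which Theorem \ref{thm:kuh} is a special case), not the other way around. So the step you yourself flag as the main obstacle is genuinely missing, not merely technical.

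The paper closes this gap with two tools you do not use. A nonlinear Schwarz reflection principle \cite[Theorem 1.8]{R} shows that the boundary condition $f^{\sharp}\to c$ forces $f$ to extend meromorphically across $\partial\D$ (this also disposes of the boundary-regularity issue you raise at the end). Then $u=\log f^{\sharp}-\log c$ is a positive $C^2$ solution of $\Delta u=-4c^2e^{2u}$ on $\overline{\D}$ with $u=0$ on $\partial\D$, and the Gidas--Ni--Nirenberg symmetry theorem \cite{GNN} forces $u$ to be radial; an elementary ODE analysis then shows $u(z)=\log\bigl(|\eta|/(1+|\eta|^2|z|^2)\bigr)-\log c$ with $|\eta|=(1\pm\sqrt{1-4c^2})/(2c)$, which is precisely the extremality of $f^{\sharp}(0)$ that your scheme needs; Theorem \ref{cor:2} and $f(0)=0$ then give $f(z)=\eta z$. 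Without an ingredient of this strength (reflection plus a symmetry theorem, or some substitute), the passage from the Dirichlet-type boundary identity to constancy of $\Delta P$ does not follow, since, as you note, the normal derivative on $\partial\D$ is free.
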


This answers the question raised by K\"uhnau in \cite{K} (Remark after Satz
2') who proved  Theorem \ref{thm:kuh}
 under the additional condition that $f$ is holomorphic on a neighborhood of
$\overline{\D}$ and $f(\D) \subseteq \D$, and asked if this condition is
necessary. Now geometrically,  it is natural to think of a function $f \in \mathcal{M}(\D)$
as a map $f : (\D,d_{\D}) \to (\hat{\C},d_{\hat{\C}})$ from
the unit disk $\D$ equipped with the hyperbolic distance $d_{\D}$  into the
Riemann sphere $\hat{\C}$
equipped with the spherical distance $d_{\hat{\C}}$. We call such a map
$f :(\D,d_{\D}) \to (\hat{\C},d_{\hat{\C}})$ length-preserving on the circle
$|z|=\varrho<1$ if
for each subarc $\gamma$ of $|z|=\varrho$ the
spherical length of $f(\gamma)$ is exactly the hyperbolic length of
$\gamma$.
Theorem \ref{thm:kuh} then
 has the following, perhaps appealing
geometric interpretation, compare \cite{K} for the special case  $f(\D) \subseteq \D$.

\begin{corollary} \label{cor:4}
Let $f \in \mathcal{M}(\D)$ be a locally univalent function such that $f(0)=0$ and
$0<\varrho<1$. Suppose that $f :(\D,d_{\D}) \to (\hat{\C},d_{\hat{\C}})$ is
length-preserving on the circle $|z|=\varrho$. Then $f(z)=\eta z$ for some
$\eta \in \C$.
\end{corollary}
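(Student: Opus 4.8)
The plan is to deduce Corollary \ref{cor:4} from Theorem \ref{thm:kuh} by a rescaling argument, after first converting the geometric ``length-preserving'' hypothesis into a pointwise statement about $f^{\sharp}$. Write $\lambda_{\D}$ for the density of the hyperbolic metric on $\D$ and $\sigma_{\hat{\C}}$ for the density of the spherical metric on $\hat{\C}$, so that for a subarc $\gamma$ of $|z|=\varrho$ the hyperbolic length of $\gamma$ is $\int_{\gamma}\lambda_{\D}(z)\,|dz|$ while the spherical length of $f(\gamma)$ is $\int_{\gamma}\sigma_{\hat{\C}}(f(z))\,|f'(z)|\,|dz|$, which is a fixed positive constant multiple of $\int_{\gamma}f^{\sharp}(z)\,|dz|$. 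The map $z\mapsto f^{\sharp}(z)$ is continuous on all of $\D$ (including at the poles of $f$), and $\lambda_{\D}$ depends only on $|z|$, hence is constant on $|z|=\varrho$. Requiring the two length functionals to coincide for \emph{every} subarc $\gamma$ of $|z|=\varrho$ therefore forces $f^{\sharp}$ to be equal to one and the same constant $k$ on that whole circle; here $k>0$ is the number determined by $\varrho$ and the normalisations of the two metrics, and only its positivity will matter.

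Next I would pass to the rescaled function $g(z):=f(\varrho z)$ for $z\in\D$. Then $g\in\mathcal{M}(\D)$; it is locally univalent, being the composition of $f$ with the conformal map $z\mapsto\varrho z$ of $\D$ into $\D$; and $g(0)=f(0)=0$. A direct computation gives $g^{\sharp}(z)=\varrho\,f^{\sharp}(\varrho z)$. Since $f^{\sharp}$ is continuous on the compact set $\{\,|w|\le\varrho\,\}\subseteq\D$ and is identically $k$ on $|w|=\varrho$, the function $g^{\sharp}$ extends continuously to $\overline{\D}$ with $\lim_{z\to\xi}g^{\sharp}(z)=\varrho\,k=:c>0$ for every $\xi$ with $|\xi|=1$. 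Thus $g$ satisfies exactly the hypotheses of Theorem \ref{thm:kuh} with this value of $c$.

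Applying Theorem \ref{thm:kuh} to $g$ yields $g(z)=\eta z$ for some $\eta\in\C$, that is, $f(\varrho z)=\eta z$ for all $z\in\D$; equivalently $f(w)=(\eta/\varrho)\,w$ on $|w|<\varrho$, and then $f(w)=(\eta/\varrho)\,w$ throughout $\D$ by the identity theorem for meromorphic functions. This is the assertion of the corollary.

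The argument is short, and essentially all of the content is in the first step: translating ``length-preserving on $|z|=\varrho$'' into the pointwise identity $f^{\sharp}\equiv k$ on that circle. The point that needs a little care is that equality of the two length functionals over \emph{all} subarcs implies equality of the underlying line elements — a routine differentiation-of-the-integral argument using the continuity of $f^{\sharp}$ — together with bookkeeping of the (harmless) normalisation constant relating $\sigma_{\hat{\C}}(f)\,|f'|$ to $f^{\sharp}$. Once that is settled, the remaining steps are the elementary rescaling $g=f(\varrho\,\cdot)$ and an appeal to Theorem \ref{thm:kuh}.
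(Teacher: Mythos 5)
Your argument is correct and is precisely the deduction the paper intends: the length-preserving hypothesis, compared with the rotation-invariant hyperbolic density on $|z|=\varrho$, forces $f^{\sharp}$ to be a positive constant on that circle, and the rescaling $g(z)=f(\varrho z)$ (with $g^{\sharp}(z)=\varrho\, f^{\sharp}(\varrho z)$ and continuity of $f^{\sharp}$ across poles) puts $g$ in the hypotheses of Theorem \ref{thm:kuh}, whence $g$ and then $f$ are linear by the identity theorem. This matches the paper's route, including the bookkeeping that yields the constant $c=\varrho k$ whose exact value is irrelevant beyond positivity.
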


In fact, Theorem \ref{cor:2} shows that only $c \le 1/2$ is possible in
Theorem \ref{thm:kuh}. A short computation (compare \cite{K}) implies that
this means that only $\varrho \le \sqrt{2}-1$ is 
possible in Corollary \ref{cor:4}. Hence there are  locally univalent
meromorphic maps $f :
(\D,d_{\D}) \to (\hat{\C},d_{\hat{\C}})$, $f(0)=0$, which are length--preserving on a
circle $|z|=\varrho$ if and only if $\varrho \le \sqrt{2}-1$. In contrast,
there are ``hyperbolically'' length--preserving locally univalent maps $f :
(\D,d_{\D}) \to (\D,d_{\D})$ (analogously defined)  of the form $f(z)=\eta z$ on any circle $|z|=\varrho
\in (0,1)$.

\medskip

The plan of this note is as follows. In Section 2 we prove Theorems
\ref{cor:2}, \ref{thm:z0} and \ref{thm:z01}. In Section 3 we discuss the
relation of the Schwarz lemma for $\mathcal{F}_c$ with Beurling's extension of
the Riemann mapping theorem  and in particular prove Theorem
\ref{thm:kuh}. The main additional tool is
a celebrated result of Gidas, Ni and Nirenberg \cite{GNN}
concerning positive solutions of certain semilinear elliptic PDEs.
 The paper concludes with Section 4, which presents a simple direct proof of the
Grahl--Nevo normality criterion and a quantitative normality result for
rational functions in $\mathcal{F}_c$ based on a Bernstein--type
inequality for rational functions due to Borwein and Erdelyi \cite{BE}.

\section{Proof of Theorems \ref{cor:2}, \ref{thm:z0} and \ref{thm:z01}}

\begin{proof}[Proof of Theorems \ref{cor:2} and \ref{thm:z0}] Let $z_0 \in \D$.
 Since postcomposing $f \in \mathcal{F}_c$ with a rigid motion of the
Riemann sphere does not change the spherical derivative of $f$, we may assume that $f(z_0)=0$.
\medskip

(i) \, We consider the unit disk automorphism
 $$S(z)=\frac{z+z_0}{1+\overline{z_0}z } \, , $$
and  the auxiliary functions
$$ u(z):=\log f^{\sharp}(S(z))=\log \frac{|f'(S(z))|}{1+|f(S(z))|^2}$$
and
$$  v_r(z):=\log \frac{|f'(S(z))|}{1+\left| \displaystyle \frac{f(S(z))}{z} r
  \right|^2}\, , \quad 0 < r \le 1 \, .$$
Since $f$ is a locally univalent meromorphic function, the function $\log f^{\sharp}$ is
smooth and in fact satisfies Liouville's equation
$$ \Delta \log f^{\sharp}=-4 f^{\sharp}(z)^2  \quad \text{ in } \D , $$
as it is well--known and also easy to show by direct computation. Hence, $u$
is superharmonic on $\D$. Since $S(0)=z_0$ and $f(z_0)=0$, we see that also
$v_r$ is a  smooth function on $\D$. In view of
$$ v_r(z)=\log g_r^{\sharp}(z)+\log \left| \frac{ f'(S(z))}{ g_r'(z)} \right|
\, , $$
where
$$ g_r(z):=\frac{(f \circ S)(z)}{z} r \in \mathcal{M}(\D) \, , $$
we see that
\begin{equation} \label{eq:vr}
  \Delta v_r(z)=-4 g_r^{\sharp}(z)^2 \, , \qquad z \in\D \, .
  \end{equation}
Hence $v_r$ is a superharmonic function on $\D$ as well.

\medskip

(ii) \, 
Since for each $0<r<1$,
$$u =v_r \quad \text{ on } |z|=r \, , $$
the minimum principle applied to the  superharmonic functions $u$ and $v_r$  shows
$$ \inf \limits_{|z| \le r} u(z) =\inf \limits_{|z| \le r} v_r(z) \le v_r(0) \, .$$ 
Letting $r \to 1$, we get 
\begin{equation} \label{eq:6}
 \log c \le \inf \limits_{z \in \D} \log f^{\sharp}(z)   =\inf \limits_{z \in \D}
u(z)=\inf \limits_{z \in \D} v_1(z)  \le   v_1(0)\, ,
\end{equation}
or, equivalently,
$$ c \le \inf \limits_{z \in \D} f^{\sharp}(z)  \le
\frac{|f'(z_0)|}{1+\left(1-|z_0|^2 \right)^2 |f'(z_0)|^2} \, .$$
Hence
$$ c \left(1-|z_0|^2 \right)^2|f'(z_0)|^2-|f'(z_0)|+c \le 0 \, .$$
Now, the quadratic function $$\varrho(x):=c (1-|z_0|^2)^2 x^2-x+c$$ has the
zeros
$$ \frac{1\pm \sqrt{1-4 c^2 \left(1-|z_0|^2 \right)^2}}{2c \left(1-|z_0|^2
  \right)^2} \, , $$
so it takes on nonpositive values if and only if
$$ c \le \frac{1}{2 (1-|z_0|^2)} \, .$$
In particular, this shows that $c \le 1/2$ and
\begin{equation} \label{eq:7}
\frac{1-\sqrt{1-4 c^2 \left(1-|z_0|^2 \right)^2}}{2c \left(1-|z_0|^2 \right)^2} \le |f'(z_0)| \le \frac{1+\sqrt{1-4 c^2\left(1-|z_0|^2 \right)^2}}{2c\left(1-|z_0|^2 \right)^2}\, .
\end{equation}
If equality holds on either side of (\ref{eq:7}), then equality holds in both inequalities of (\ref{eq:6}).
Therefore the minimum principle shows that $v_1$ is constant. In view of
(\ref{eq:vr}), we get
 that $g_1$ is constant and hence
$f(S(z))=\eta z$ for some $\eta \in \C$. Now, using again that
$$v_1(z)= \log \frac{|(f \circ S)'(z)|}{1+\left| (f \circ
    S)(z)/z\right|^2}-\log |S'(z)|$$ is constant, we  see that $S'$ is constant
 and therefore $z_0=0$. Hence (\ref{eq:7}) is sharp only for $z_0=0$.
Now let $z_0=0$. Then  equality holds on either side of
(\ref{eq:7}) if and only if $f(z)=\eta z$ and 
$$ |\eta|=\frac{1 \pm \sqrt{1-4c^2}}{2c} \, .$$
If, in addition,  $c=1/2$, then equality holds in (\ref{eq:7}) on both sides,
so $|f'(0)|=1$, that is, every $f \in \mathcal{F}_c$ such that $f(0)=0$ has
the form $f(z)=\eta z$ with $|\eta|=1$.  This completes the proof.
  \end{proof}

The proof of Theorem \ref{thm:z01} is  based on the following simple  Schwarz--Pick type
lemma.

\begin{lemma} \label{lem:sp}
Suppose that $z_0 \in \D\setminus\{0\}$ and $w : \D \to \D$ is a holomorphic function such
that $w(z_0)=0$ and $w''(z_0)=0$. Then
$$ |w'(z_0)| \le  \frac{\sqrt{4+|z_0|^2}-|z_0|}{2 \left(1-|z_0|^2 \right)} \, .$$
Equality can hold only  if $w$ is a Blaschke product of
degree $2$.
\end{lemma}

In the proof we will identify all the extremal functions semi--explicitly. We intentionally
have excluded the case $z_0=0$ in Lemma \ref{lem:sp}.

\begin{proof}
  Write
  $$ w(z)=\frac{z-z_0}{1-\overline{z_0} z} \, g(z)$$
  for some holomorphic function $g : \D \to \overline{\D}$ and note that
  $g(\D) \subseteq \D$. Then $w''(z_0)=0$
  is equivalent to\begin{equation}
    \label{eq:5c}
    g'(z_0)=-\frac{\overline{z_0}}{1-|z_0|^2} \, g(z_0)  \, .
    \end{equation}
  The Schwarz--Pick lemma applied to $g$ implies
  $$ \frac{1-|g(z_0)|^2}{1-|z_0|^2} \ge |g'(z_0)|= \frac{|z_0|}{1-|z_0|^2} \,
  |g(z_0)| \, , $$
  which is equivalent to
  $$ |g(z_0)| \le \frac{\sqrt{4+|z_0|^2}-|z_0|}{2}\quad
    \Longleftrightarrow\quad |w'(z_0)| \le  \frac{\sqrt{4+|z_0|^2}-|z_0|}{2
      \left(1-|z_0|^2 \right)} \, .$$
  Again by the Schwarz--Pick lemma, we see that equality occurs if and only if
  $g$ is a unit disk automorphism such that (\ref{eq:5c}) holds.
\end{proof}

\begin{proof}[Proof of Theorem \ref{thm:z01}] In view of Theorem \ref{cor:2}
  we may assume that $z_0 \not=0$.
  We first closely follow  the proof
  of (\ref{eq:ba1}) in \cite{St} and  assume $f(z_0)=0$.
  Since $f \in \mathcal{M}(\D)$ is locally univalent its Schwarzian derivative
$$ S_f(z)= \left( \frac{f''(z)}{f'(z)} \right)'-\frac{1}{2} \left(
  \frac{f''(z)}{f'(z)} \right)^2$$
is \textit{holomorphic} in $\D$ and  we can
  write
  $$ f= \frac{w_1}{w_2} $$
  with holomorphic functions $w_1,w_2 : \D \to \C$ both of which  are 
  solutions of the linear second order ODE
  \begin{equation} \label{eq:7a}
    w''+\frac{S_f(z)}{2} w=0 \,.
  \end{equation}
  Note that the Wronskian $w_1' w_2-w_1 w_2'$ of $w_1$ and $w_2$ is constant, so by renormalizing the solutions we may assume that
  $$ w_1' w_2-w_1 w_2' \equiv 1 \, .$$
 In particular, we get
  $$ f^{\sharp}(z)=\frac{1}{|w_1(z)|^2+|w_2(z)|^2} \qquad \text{ and } 
  \qquad f^{\sharp}(z_0)=\frac{1}{|w_2(z_0)|^2}=|w_1'(z_0)|^2 \, .$$
  Since $f^{\sharp}(z) \ge c$, the first identity shows that $w:=\sqrt{c} \,
  w_1$ is a holomorphic selfmap of the unit disk  with $w(z_0)=0$ and
  $$w''(z_0)=-\frac{S_f(z_0)}{2}  w(z_0)=0\, ,$$
since $S_f$ is holomorphic. Hence we are in a position to apply  
Lemma \ref{lem:sp} and obtain
$$ f^{\sharp}(z_0) =|w_1'(z_0)|^2 =\frac{|w'(z_0)|^2}{c} \le \left( \frac{\sqrt{4+|z_0|^2}-|z_0|}{2} \right)^2 
    \frac{1}{c \left(1-|z_0|^2 \right)^2} \,. $$
  \end{proof}

  \begin{remark}
    Using the standard Schwarz--Pick lemma
    $$|w'(z_0)| \le  \frac{1}{1-|z_0|^2}$$
    instead of the ``improved'' Schwarz--Pick type Lemma
  \ref{lem:sp} in the last step of the preceding proof gives the less precise
  inequality  (\ref{eq:ba1}).  This is exactly the proof of (\ref{eq:ba1})
  given in \cite{St}. It does not fully use the fact that $S_f$ is
  \textit{holomorphic}. 
\end{remark}

\section{The Schwarz lemma for $\mathcal{F}_c$ and nonlinear boundary value problems}

In this section we show that the interior extremal problem solved in Theorem
\ref{cor:2} can be related to a ``dual'' nonlinear boundary extremal problem. 
This establishes a link between the Schwarz lemma for $\mathcal{F}_c$ and  a class of  boundary
value problems arising in conformal geometry which have first been investigated
by Beurling \cite{B}.

\medskip

The point of departure is the following  peculiar property of the
extremal functions in Theorem \ref{cor:2}:

\begin{theorem} \label{thm:ni}
  Let $c >0$ and $f \in \MD$ locally univalent. Then the following are equivalent:
  \begin{itemize}
  \item[(a)] $\displaystyle \lim \limits_{|z| \to 1} f^{\sharp}(z)=c$.
    \item[(b)]  $c \le 1/2$ and $f(z)=T(\eta z)$ with a
      rigid motion $T$ of the Riemann sphere and
  \begin{equation} \label{eq:eta} |\eta|= \frac{1 \pm \sqrt{1-4 c^2}}{2 c} \, .\end{equation}
          \end{itemize}
        \end{theorem}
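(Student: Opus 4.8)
The plan is to prove the two implications separately, with the harder work concentrated in (b) $\Rightarrow$ (a). For (b) $\Rightarrow$ (a), suppose $f(z)=T(\eta z)$ with $|\eta|=(1\pm\sqrt{1-4c^2})/(2c)$ and $T$ a rigid motion. Since postcomposition with a rigid motion of the Riemann sphere leaves $f^\sharp$ unchanged, we may assume $f(z)=\eta z$. A direct computation gives
\[
f^\sharp(z)=\frac{|\eta|}{1+|\eta|^2|z|^2},
\]
so
\[
\lim_{|z|\to 1} f^\sharp(z)=\frac{|\eta|}{1+|\eta|^2}.
\]
It then remains the purely algebraic check that plugging in $|\eta|=(1\pm\sqrt{1-4c^2})/(2c)$ yields exactly $c$; this follows because such $\eta$ is precisely a root of $cx^2-x+c=0$, i.e.\ $|\eta|=c(1+|\eta|^2)$, which rearranges to $|\eta|/(1+|\eta|^2)=c$. (The constraint $c\le 1/2$ is exactly what is needed for $\eta$ to be real and positive.)

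For the reverse implication (a) $\Rightarrow$ (b), assume $\lim_{|z|\to1} f^\sharp(z)=c$. The first thing to observe is that this forces $f\in\mathcal F_c$: indeed, $\log f^\sharp$ is superharmonic on $\D$ by Liouville's equation, so by the minimum principle $\inf_{z\in\D} f^\sharp(z)$ is attained only in the limit towards $\partial\D$ (or $f^\sharp$ is constant); since the boundary limit is $c$, we get $f^\sharp(z)\ge c$ for all $z\in\D$, hence $f\in\mathcal F_c$ and in particular $c\le 1/2$ by Theorem~\ref{cor:2}(a). Now fix any $z_0\in\D$ and postcompose with a rigid motion so that $f(z_0)=0$. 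Re-run the argument in the proof of Theorem~\ref{cor:2}/\ref{thm:z0}: with $S(z)=(z+z_0)/(1+\overline{z_0}z)$, the functions $u(z)=\log f^\sharp(S(z))$ and $v_1(z)=\log\big(|f'(S(z))|/(1+|f(S(z))/z|^2)\big)$ are superharmonic, agree on circles $|z|=r$ in the limit $r\to1$, and the chain of inequalities \eqref{eq:6} becomes
\[
\log c \le \inf_{z\in\D} u(z) = \inf_{z\in\D} v_1(z) \le v_1(0).
\]
The point is that now $\inf_{z\in\D} f^\sharp(z)=c$ exactly (the boundary limit is $c$ and we just showed $f^\sharp\ge c$), so the first inequality is an \emph{equality}. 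By the minimum principle for the superharmonic function $v_1$, equality $\log c=\inf v_1=v_1(0)$ forces $v_1$ to be constant, and then exactly as in the proof of Theorem~\ref{cor:2} one deduces successively that $g_1(z)=f(S(z))/z$ is constant, that $S'$ is constant (whence $z_0=0$), and finally that $f(z)=\eta z$ with $|\eta|=(1\pm\sqrt{1-4c^2})/(2c)$. Undoing the initial rigid motion gives $f(z)=T(\eta z)$ as claimed.

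The main obstacle — really the only subtle point — is justifying the equality $\inf_{z\in\D} f^\sharp(z)=c$ from the hypothesis that $f^\sharp\to c$ radially (or along $|z|\to1$), and making sure the minimum principle is being applied correctly on the open disk: the infimum of a superharmonic function on $\D$ need not be attained, but if it equals the common boundary limiting value and the function is $\ge$ that value everywhere, then equality of the infimum with an interior value $v_1(0)$ does force constancy. One should state carefully that $\lim_{|z|\to1} f^\sharp(z)=c$ means the limit exists and equals $c$ uniformly (or at least that $\liminf$ on every boundary sequence is $c$), which is what licenses the comparison; this is a minor technical elaboration of the minimum-principle argument already used for Theorem~\ref{cor:2} and carries no essential difficulty. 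Everything else is the algebra of the quadratic $cx^2-x+c$ and the bookkeeping of the substitution $S$, both already done in Section~2.
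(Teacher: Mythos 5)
Your direction (b) $\Rightarrow$ (a) is fine and is exactly what the paper does (a direct computation). The direction (a) $\Rightarrow$ (b), however, has a genuine gap at the decisive step, and the route you chose cannot be repaired. From $\lim_{|z| \to 1} f^{\sharp}(z)=c$ you correctly deduce $f^{\sharp}\ge c$ on $\D$ and $c\le 1/2$. But then you assert that, because the first inequality in the chain (\ref{eq:6}) becomes an equality, one has ``$\log c=\inf v_1=v_1(0)$'' and the minimum principle forces $v_1$ to be constant. The chain only yields $\inf_{\D} v_1 \le v_1(0)$: knowing $\inf_{\D} u=\inf_{\D} v_1=\log c$ tells you nothing about the interior value $v_1(0)$, which in general is strictly larger than $\log c$ (it equals $\log\bigl(|f'(z_0)|/(1+(1-|z_0|^2)^2|f'(z_0)|^2)\bigr)$, and this equals $\log c$ only when $|f'(z_0)|$ is one of the two extremal roots). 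A superharmonic function is constant only if it \emph{attains} its infimum at an interior point; here the infimum $\log c$ is merely approached at the boundary, so no constancy follows. Compare the paper's proof of Theorem \ref{cor:2}: there the equality $v_1(0)=\log c$ is forced by the \emph{hypothesis} of equality in (\ref{eq:7}), which is precisely what you do not have.

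The deeper problem is that the only consequence of (a) your argument ever uses is $\inf_{\D} f^{\sharp}=c$, and this weaker statement does not imply (b). For instance, $f(z)=e^{az}$ is locally univalent in $\D$ with $f^{\sharp}(z)=|a|/(2\cosh(\Re (az)))$, so $\inf_{\D} f^{\sharp}=|a|/(2\cosh|a|)=:c<1/2$, yet $f$ is not of the form $T(\eta z)$ since $f^{\sharp}$ is not radial; of course this $f$ violates (a), because the boundary limit of $f^{\sharp}$ depends on the boundary point. Hence any correct proof must exploit that $f^{\sharp}\to c$ at \emph{every} point of $\partial\D$, i.e.\ it must use the full Liouville equation $\Delta\log f^{\sharp}=-4(f^{\sharp})^2$ together with this boundary condition, not merely superharmonicity and the value of the infimum. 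This is exactly what the paper does: condition (a) plus the conformal reflection principle of \cite{R} gives a meromorphic continuation of $f$ across $\partial\D$, so that $u=\log f^{\sharp}-\log c$ is a positive $C^2$ solution of the boundary value problem (\ref{eq:bvp}); the Gidas--Ni--Nirenberg theorem \cite{GNN} then forces $u$ to be radially symmetric, and an elementary ODE uniqueness analysis identifies the radial solutions (\ref{eq:sol}), which yields (b). None of this machinery can be replaced by the interior Schwarz--lemma argument alone.
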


In particular, this proves Theorem \ref{thm:kuh} which is merely a special case of
Theorem \ref{thm:ni}.

  \begin{proof} (b) $\Longrightarrow$ (a): This is just a computation.

    (a) $\Longrightarrow$ (b): This is a simple application of a rather deep
    result of Gidas, Ni and Nirenberg \cite{GNN}, which has become a standard
    tool in elliptic PDE, in combination with a nonlinear version of the
    Schwarz reflection principle, see \cite{R}. Let $f \in \MD$ be locally univalent and satisfy
    condition (a). By \cite[Theorem 1.8]{R}, we infer that $f$ has a
    meromorphic continuation to an open neighborhood of the closed unit disk
    $\overline{\D}$. This shows that
    $$ u(z):=\log f^{\sharp}(z)-\log c$$
    is a  $C^2$--function on $\overline{\D}$, that is, on an open neighborhood
    of $\overline{\D}$, such that
\begin{equation} \label{eq:bvp} \Delta u=-4 c^2 e^{2u}  \text{ on } \overline{\D} \quad \text{ and } \quad u=0 \text{ on }
  \partial \D \, .
\end{equation}
By the minimum principle, the superharmonic function $u$ is
\textit{positive} on $\D$. Hence
Theorem 1 in \cite{GNN} forces $u$ to be \textit{radially} symmetric,
$$ u(z)=v(r) \qquad (r=|z|)$$
for some strictly decreasing $C^2$--function $v : [0,1] \to [0,\infty)$.  It is now a
simple matter to see that all radially symmetric solutions of the
boundary value problem (\ref{eq:bvp})   have the form
\begin{equation} \label{eq:sol}
u(z)=\log \frac{|\eta|}{1+|\eta|^2 |z|^2}-\log c
\end{equation}
with $\eta \in \C$ as in (\ref{eq:eta}). For convenience, we indicate the main
steps. 
Since
$$ \Delta u(z)= \frac{1}{r} \left( r v'(r) \right)' \, ,$$
where $r=|z|$ and $'$ indicates differentiation with respect to $r$, 
the strictly decreasing  function  $v \in C^2([0,1])$ solves
$$ (r v'(r))'= -4 c^2 r \, e^{2 v(r)} \text{ on } [0,1] \, , \qquad v(1)=0 \, .$$
We substitute $r=e^{x}$ and obtain for $w(x):=v(e^x)+x+\log (2c)$ the initial
value problem
$$ w''(x)=  - e^{2 w(x)} \text{ on } (-\infty,0] \, , \qquad
w(0)=\log(2c) \, .$$
This ODE has $2 \, w'(x)$ as an integrating factor,  so
$$ \left( w'(x)^2 \right)'= - \left( e^{2 w(x)} \right)' \, .$$ 
Integrating from $x=a$ to $x=t$ and using  that $$\lim \limits_{a \to -\infty}
w'(a)=\lim \limits_{a \to -\infty}e^{a} v'(e^a)+1= 1$$ as well as
$$ \lim \limits_{a \to -\infty} w(a) = -\infty\, ,$$
we arrive at
\begin{equation} \label{eq:ODE}
  w'(x)^2=1-e^{2 w(x)} \quad \text{ on } (-\infty,0] \, .
\end{equation}

In particular, $w(x) \le 0$ for all $x \in (-\infty,0]$. This implies $\log (2c)=w(0) \le 0$, and hence $c \le 1/2$.
Note also that there exists no subinterval $[a,b] \subseteq (-\infty,0]$ such that $w(a)=w(b)=0$ and $w(x) \not=0$ for all $x \in (a,b)$, since otherwise we would have $w'(c)=0$ for some $c \in (a,b)$ and so $w(c)=0$ by (\ref{eq:ODE}).
Since $w''=-e^{2 w}$, the $C^2$--function $w$ cannot vanish identically on any
open interval, so it follows that $w$ vanishes at one point at most. 
Therefore,
$$ w'(x)=\pm \sqrt{1- e^{2 \, w(x)}} \, $$ 
with at most one change of sign on $(-\infty,0]$.
The resulting two ODEs (one for each sign) are both separable. Hence if one is
patient enough,  elementary integration on each interval where $w\not=0$, would
ultimately show that the solutions $u(z)$ to (\ref{eq:bvp}) have the form
(\ref{eq:sol}). However, one can avoid this lengthy calculation as
follows. Let us first consider the case $c<1/2$. If $w(x)$
has no zero in $(-\infty,0]$, then $w<0$ there and  in view of $w'(x) \to 1$ as $x \to -\infty$ we
see that 
$$ w'=\sqrt{1-e^{2w}} \, , \qquad w(0)=\log (2c) < 0 \, , $$
Hence $w$ is uniquely determined by the standard uniqueness result for ODEs.
If $w$ has exactly one zero $\alpha \in (-\infty,0)$, then $w$ is a solution of  the initial
value problem
$$ w'=-\sqrt{1-e^{2w}} \, , \qquad w(0)=\log (2c) < 0 \, ,$$
on $[\alpha,0]$, which has a unique solution on $(\alpha,0]$. Hence $\alpha$
and $ w : [\alpha,0] \to \R$ are uniquely determined by $c$. Since
$w(\alpha)=0$, we have $w'(\alpha)=0$ by $(\ref{eq:ODE})$. 
Thus $w$ solves the initial value problem $w''=-e^{2w}$,
$w(\alpha)=w'(\alpha)=0$, and is therefore uniquely determined also on $(-\infty,\alpha]$.
In the  remaining  case $w(0)=0$ and $c=1/2$, we have $w'(0)=0$ and
$w''=-e^{2w}$, so $w$ is uniquely determined as before.
We
therefore see that there are at most two solutions to (\ref{eq:bvp}) which
coincide for $c=1/2$. This shows that there are no solutions  $u(z)$ to
(\ref{eq:bvp}) different from those given by  (\ref{eq:sol}) with $\eta \in \C$ as in
(\ref{eq:eta}).  
\hide{only have to consider the
initial valu eproblem 
This leads to two explicit global solutions $w : (-\infty,0] \to \R$ (which coincide for $c=1/2$)  and ultimately shows that  the solutions $u(z)$ to (\ref{eq:bvp}) have the form (\ref{eq:sol}).}
    \end{proof}

    \begin{theorem}[The Schwarz lemma for $\mathcal{F}_c$ and a dual boundary extremal problem] \label{cor:ni}
Let $c>0$ and $F \in \mathcal{F}_c$. Then the following are equivalent.
\begin{itemize}
\item[(a)] $F$ is extremal for one of the interior extremal problems $$\max \limits_{f \in \mathcal{F}_c}
  f^{\sharp}(0) \quad \text{ or } \quad \min \limits_{f \in \mathcal{F}_c}
  f^{\sharp}(0) \, . $$
  \item[(b)] $F$ is extremal for the boundary extremal problem $$\min \limits_{f \in \mathcal{F}_c}
    \limsup \limits_{z \to \xi} f^{\sharp}(z)\quad \text{ for every } \xi \in
    \partial \D \, .$$
  \end{itemize}
\end{theorem}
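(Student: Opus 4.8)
The plan is to show that conditions (a) and (b) single out exactly the same family of functions, namely those $F$ of the form $F(z)=T(\eta z)$ with $T$ a rigid motion of the Riemann sphere and $|\eta|=\frac{1\pm\sqrt{1-4c^2}}{2c}$, and that by Theorem \ref{thm:ni} this family is precisely $\{F\in\mathcal{F}_c:\lim_{|z|\to1}F^{\sharp}(z)=c\}$. Along the way I would record two preliminary remarks: if $c>1/2$ then $\mathcal{F}_c=\emptyset$ by Theorem \ref{cor:2}(a) and there is nothing to prove, so one may assume $c\le1/2$; and every $F\in\mathcal{F}_c$ is automatically locally univalent, because $F^{\sharp}\ge c>0$ forces $F'\neq0$ off the poles and leaves only simple poles, so that Theorems \ref{cor:2} and \ref{thm:ni} apply to it.

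First I would treat (a). Parts (b) and (c) of Theorem \ref{cor:2} say that $\max_{f\in\mathcal{F}_c}f^{\sharp}(0)=\frac{1+\sqrt{1-4c^2}}{2c}$, attained exactly by $F(z)=T(\eta z)$ with $|\eta|=\frac{1+\sqrt{1-4c^2}}{2c}$, and $\min_{f\in\mathcal{F}_c}f^{\sharp}(0)=\frac{1-\sqrt{1-4c^2}}{2c}$, attained exactly by $F(z)=T(\eta z)$ with $|\eta|=\frac{1-\sqrt{1-4c^2}}{2c}$. Hence $F$ is extremal for one of the two interior problems if and only if $F(z)=T(\eta z)$ with $|\eta|=\frac{1\pm\sqrt{1-4c^2}}{2c}$, which by Theorem \ref{thm:ni} is equivalent to $\lim_{|z|\to1}F^{\sharp}(z)=c$.

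Next I would treat (b), and the first step is to pin down the value of the boundary problem. Since $f^{\sharp}(z)\ge c$ for every $f\in\mathcal{F}_c$, we have $\limsup_{z\to\xi}f^{\sharp}(z)\ge c$ for each $\xi\in\partial\D$; and the function $f(z)=\eta z$ with $|\eta|=\frac{1\pm\sqrt{1-4c^2}}{2c}$ lies in $\mathcal{F}_c$ with $\lim_{|z|\to1}f^{\sharp}(z)=c$, because $f^{\sharp}(z)=|\eta|/(1+|\eta|^2|z|^2)$ and $|\eta|/(1+|\eta|^2)=c$ for these values of $|\eta|$, so this $f$ realizes the bound $c$ at every boundary point simultaneously. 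Thus $\min_{f\in\mathcal{F}_c}\limsup_{z\to\xi}f^{\sharp}(z)=c$ for every $\xi$, and $F$ is extremal for the boundary problem precisely when $\limsup_{z\to\xi}F^{\sharp}(z)=c$ for all $\xi\in\partial\D$. It then remains to show that this is equivalent to $\lim_{|z|\to1}F^{\sharp}(z)=c$. One direction is immediate. For the other I would argue by compactness: assuming $\limsup_{z\to\xi}F^{\sharp}(z)=c$ for all $\xi$ but $\limsup_{|z|\to1}F^{\sharp}(z)>c$, pick $\varepsilon>0$ and points $z_n\in\D$ with $|z_n|\to1$ and $F^{\sharp}(z_n)\ge c+\varepsilon$, pass to a subsequence with $z_n\to\xi_0\in\partial\D$, and read off $\limsup_{z\to\xi_0}F^{\sharp}(z)\ge c+\varepsilon$, a contradiction. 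Hence $\limsup_{|z|\to1}F^{\sharp}(z)\le c$, and combined with $\liminf_{|z|\to1}F^{\sharp}(z)\ge c$ (again from $F^{\sharp}\ge c$) this gives $\lim_{|z|\to1}F^{\sharp}(z)=c$. Comparing with the previous paragraph, (a) $\Longleftrightarrow\lim_{|z|\to1}F^{\sharp}(z)=c\Longleftrightarrow$ (b).

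I do not expect a genuine obstacle here: once the two preliminary remarks are in place, the argument is essentially bookkeeping with Theorems \ref{cor:2} and \ref{thm:ni}. The only points that call for an actual argument are the determination that the dual boundary problem has value $c$ and is attained, and the short compactness step that upgrades the pointwise-in-$\xi$ control of $\limsup_{z\to\xi}F^{\sharp}$ to uniform control of $F^{\sharp}$ near $\partial\D$; both are routine.
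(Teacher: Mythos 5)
Your proposal is correct and follows essentially the same route as the paper: the paper simply declares the theorem ``obvious'' by combining the identification of the interior extremals in Theorem \ref{cor:2} with the identification in Theorem \ref{thm:ni} of those $f$ with $\lim_{|z|\to 1}f^{\sharp}(z)=c$, which is exactly your chain of equivalences. The only difference is that you spell out the bookkeeping the paper suppresses — that every $F\in\mathcal{F}_c$ is locally univalent, that the boundary problem has value $c$ attained by the rotations, and the short compactness step upgrading $\limsup_{z\to\xi}F^{\sharp}(z)=c$ for all $\xi\in\partial\D$ to $\lim_{|z|\to1}F^{\sharp}(z)=c$ — all of which is accurate.
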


The proof of Theorem \ref{cor:ni} is by now obvious because we have identified all functions $F \in \mathcal{F}_c$
with property (a) in Theorem \ref{cor:2} and those with property (b) in
Theorem \ref{thm:ni} in an explicit way.
It would be desirable to have a direct proof of the fact
that (a) and (b) are equivalent.

\begin{problem}
Theorem  \ref{cor:ni} roughly says that every $f \in \mathcal{F}_c$ that
maximizes/minimizes $f^{\sharp}$ at the \textit{origin} actually minimizes
$f^{\sharp}$ on the \textit{entire} unit circle. Now suppose that $f \in
\mathcal{F}_c$ maximizes/minimizes $f^{\sharp}$ over the set $\mathcal{F}_c$ at a point $z_0\not=0$. Does 
$f^{\sharp}$ have a corresponding boundary extremal property on (part of) the unit circle\,?
\end{problem}

We are now in a position to 
 relate the Schwarz lemma for the class
 $\mathcal{F}_c$  with Beurling's well--known extension
 of the Riemann mapping theorem (see \cite{Av94, Av96, B,
   C2, BKRW, FRold}).  
 Denote by $\mathcal{H}_0(\D)$ the set of all holomorphic functions $g : \D \to \C$ with
 $g(0)=0$ and $g'(0)>0$. For a given  positive, continuous and \textit{bounded} function
 $\Phi : \C \to \R$,  Beurling \cite{B} considered the
 nonlinear boundary value problem\footnote{This is a ``Riemann--Hilbert--Poincar\'e problem''.}
\begin{equation} \label{eq:beurling}
  \lim \limits_{|z| \to 1} \big( |g'(z)|-\Phi(g(z)) \big)=0
  \end{equation}
 and showed that this problem always admits \textit{univalent} solutions $g
 \in \mathcal{H}_0(\D)$. In fact, Beurling even showed that there is always 
a kind of  ``maximal'' resp.~``minimal'' univalent solution. In order to find the
 ``minimal'' univalent solution,  Beurling considered the set of univalent
 ``supersolutions'' of (\ref{eq:beurling}), 
 $$ B_{\Phi}:=\left\{g \in \mathcal{H}_0(\D) \text{ univalent} \, \Big| \, 
       \liminf  \limits_{|z| \to 1} \big( |g'(z)|-\Phi(g(z)) \big) \ge 0 \right\}\, ,$$ 
     and proved in a first step that there is a unique function $g^* \in B_{\Phi}$ such that
     $$ {g^*}'(0)=\inf \limits_{g \in B_{\Phi}} g'(0)\, .$$
     In a second step, he then showed that
     this ``minimal'' supersolution is in fact a solution of the boundary
     value problem (\ref{eq:beurling}). It appears that for Beurling's method the assumption that $\Phi$ is
     \textit{bounded} (or at least of sublinear growth as in \cite{Av94}) is fairly essential.
     Now, it is easy to see that Beurling's set  of supersolutions for the \textit{unbounded}
     function
     $$ \Phi_c(w):=c \left(1+|w|^2 \right) $$
     can be written as
     $$ B_{\Phi_c}=\left\{g \in \mathcal{H}_0(\D) \text{ univalent} \, \Big|
       \, 
       \liminf  \limits_{|z| \to 1} \frac{|g'(z)|}{1+|g(z)|^2} \ge c
     \right\} \, .$$
Since $\log g^{\sharp}$ is superharmonic for every (locally) univalent function $g$, we hence see that
$$ B_{\Phi_c} \subseteq \mathcal{F}_c \, .$$
Therefore, Theorem \ref{cor:2} implies that for any $c \in (0,1/2]$
the function
$$ g_c(z) =\frac{1-\sqrt{1-4c^2}}{2 c} z \, ,$$
which belongs to $ B_{\Phi_c}$, is the unique extremal function for the extremal problem
$$ \inf \limits_{g \in B_{\Phi_c}} g'(0)  \, , $$
 and $g_c$ 
is obviously a   solution to Beurling's boundary value problem (\ref{eq:beurling})
for $\Phi=\Phi_c$. Clearly, an analogous result holds for the unique function
in $B_{\Phi_c}$ which maximizes $g'(0)$ for all $g \in B_{\Phi_c}$. By Theorem
\ref{thm:ni} and the superharmonicity of $\log g^{\sharp}$ for any locally univalent function $g \in \mathcal{H}_0(\D)$, these two solutions are the only two \textit{locally univalent} solutions to
(\ref{eq:beurling}) for $\Phi=\Phi_c$\,!
To put it differently, Theorem \ref{cor:2} and Theorem
\ref{thm:ni} provide an extension of  Beurling's results at least for the
specific function
$$ \Phi(w)=\Phi_c(w)=c \left(1+|w|^2 \right) \, , \qquad c \le 1/2 \, ,$$
which is of \textit{quadratic} and not merely sublinear growth. 

\medskip

For convenience, we state these considerations as a theorem, which as we have
seen is now merely a restatement of Theorem \ref{cor:2} and  Theorem \ref{thm:ni}.

   \begin{theorem}[The Beurling--Riemann mapping theorem for the spherical
     metric] \label{thm:beu}
     Suppose that $c>0$ and consider the boundary value problem
    \begin{equation} \label{eq:be}
 \lim \limits_{|z| \to 1}  \big( |g'(z)|-c \left(1+|g(z)|^2 \right)
  \big)=0 
\end{equation}
for $g \in \mathcal{H}_0(\D)$.
     \begin{itemize}
     \item[(a)] If $c <1/2$, then (\ref{eq:be}) has exactly two locally
       univalent solutions $g_{\pm} \in \mathcal{H}_0(\D)$ given by
       $$ g_{\pm}(z)=\frac{1\pm\sqrt{1-4 c^2}}{2c} z \, .$$
       These solutions are univalent and they are the  unique extremal functions for the
       extremal problems 
              $$ \max \{ g'(0) \, : \, g \in B_{\Phi_c}\}  \quad \text{ and } \quad
       \min \{ g'(0) \, : \, g \in B_{\Phi_c}\} \, .$$
       
     \item[(b)] If $c=1/2$, then $g(z)=z$ is the only locally univalent
       solution  $g \in
       \mathcal{H}_0(\D)$ of (\ref{eq:be}).
         \item[(c)] If $c>1/2$, then (\ref{eq:be}) has no locally univalent
           solution in $\mathcal{H}_0(\D)$.
       \end{itemize}
     \end{theorem}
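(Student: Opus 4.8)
The plan is to deduce Theorem~\ref{thm:beu} directly from Theorem~\ref{cor:2} and Theorem~\ref{thm:ni}, using only two elementary observations already contained in the discussion preceding the statement. The first is that for $g \in \mathcal{H}_0(\D)$ the boundary condition (\ref{eq:be}) is equivalent to $\lim_{|z| \to 1} g^\sharp(z) = c$: from
$$ g^\sharp(z) - c = \frac{|g'(z)| - c\left(1 + |g(z)|^2\right)}{1 + |g(z)|^2} $$
and $1 + |g(z)|^2 \ge 1$ one sees that the numerator tends to $0$ exactly when the quotient does. The second is that $\log g^\sharp$ is superharmonic for every locally univalent $g$, so the minimum principle upgrades $\lim_{|z| \to 1} g^\sharp(z) = c$ to the global bound $g^\sharp \ge c$ on $\D$; hence every locally univalent solution $g \in \mathcal{H}_0(\D)$ of (\ref{eq:be}) lies in $\mathcal{F}_c$, and $B_{\Phi_c} \subseteq \mathcal{F}_c$.

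With these in hand, part (c) is immediate, since a locally univalent solution for $c > 1/2$ would produce an element of $\mathcal{F}_c$ with $c > 1/2$, contradicting Theorem~\ref{cor:2}(a). For (a) and (b) I would first determine all locally univalent solutions. By the first observation such a $g$ satisfies condition (a) of Theorem~\ref{thm:ni}, hence $c \le 1/2$ and $g(z) = T(\eta z)$ with $T$ a rigid motion of the Riemann sphere and $|\eta| = (1 \pm \sqrt{1 - 4c^2})/(2c)$. The requirement $g \in \mathcal{H}_0(\D)$ then forces $T$ to be holomorphic (as $g$ is), with $T(0) = 0$ (as $g(0) = 0$), i.e.\ $T(w) = e^{i\theta} w$, and $g'(0) > 0$ fixes $\theta$, so $g(z) = |\eta|\, z$. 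Thus the only candidates are $g_{\pm}(z) = \frac{1 \pm \sqrt{1 - 4c^2}}{2c}\, z$, and conversely each $g_\pm$ is a nonvanishing linear map, hence univalent and in $\mathcal{H}_0(\D)$, while the elementary implication (b) $\Rightarrow$ (a) of Theorem~\ref{thm:ni} shows $\lim_{|z| \to 1} g_\pm^\sharp(z) = c$, so $g_\pm$ solves (\ref{eq:be}) and belongs to $B_{\Phi_c}$. For $c = 1/2$ both reduce to $z \mapsto z$, which proves (b).

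Finally, for the extremal statement in (a) with $c < 1/2$: any $g \in B_{\Phi_c} \subseteq \mathcal{F}_c$ has $g'(0) = g^\sharp(0)$ because $g(0) = 0$ and $g'(0) > 0$, so Theorem~\ref{cor:2}(b),(c) give
$$ \frac{1 - \sqrt{1 - 4c^2}}{2c} \le g'(0) \le \frac{1 + \sqrt{1 - 4c^2}}{2c} \, , $$
with equality on the right only at $g_+$ and on the left only at $g_-$, the equality clause of Theorem~\ref{cor:2} combined with the same normalization as above showing that the extremal form collapses to these real dilations. This identifies $g_+$ and $g_-$ as the unique maximizer and minimizer of $g'(0)$ over $B_{\Phi_c}$. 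No new ingredient beyond Theorems~\ref{cor:2} and \ref{thm:ni} is required; the only mildly delicate point is the repeated bookkeeping that reconciles the three normalizations ($T$ a rigid motion, $T(0) = 0$, $g'(0) > 0$) so that the general extremal function reduces to the explicit $g_\pm$.
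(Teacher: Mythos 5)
Your argument is correct and follows essentially the same route as the paper, which presents Theorem \ref{thm:beu} as a direct restatement of Theorem \ref{cor:2} and Theorem \ref{thm:ni}, using the inclusion $B_{\Phi_c} \subseteq \mathcal{F}_c$ obtained from the superharmonicity of $\log g^{\sharp}$ and the minimum principle, plus the normalization bookkeeping that reduces $T(\eta z)$ to the real dilations $g_{\pm}$. The only nitpick is that your claimed equivalence of (\ref{eq:be}) with $\lim_{|z|\to 1} g^{\sharp}(z)=c$ is unconditional only in the direction you use for the classification (since the denominator is $\ge 1$); the converse requires $g$ bounded near $\partial \D$, which is automatic for the linear maps $g_{\pm}$ where you invoke it.
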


We note that parts of Theorem \ref{thm:beu} have been proved earlier by
different means, see e.g.~\cite{Av94,Av96,FRold,K}. The essential new ingredient is
the uniqueness statement in part (a), which ultimately comes from the
Gidas--Ni--Nirenberg theorem. For this  uniqueness result  the local univalence assumption is necessary.
In fact, K\"uhnau \cite[Satz 4]{K} has shown that there are always additional solutions $g \in \mathcal{H}_0(\D)$ for Beurling's boundary value problem (\ref{eq:be}) which, however, are  not locally univalent.

\section{Concluding remarks}

\begin{remark} \label{rem:final}
In all of our results, the restriction to \textit{locally univalent} functions
is essential. The reason is that  $\log f^{\sharp}$ is
   superharmonic only for  \textit{locally univalent} functions $f \in \MD$,
   so  the  minimum principle can be applied and  shows that  
   $$ \mathcal{F}_c=\left\{ f \in \MD \text{ locally univalent} \, : \,
     \liminf \limits_{|z| \to 1} f^{\sharp}(z) \ge c \right\}$$
In fact, the larger class
$$ \mathcal{G}_c:=\left\{ f \in \MD \, : \,
     \liminf \limits_{|z| \to 1} f^{\sharp}(z) \ge c \right\}$$
   is not even a normal family in view of the following example.
\end{remark}
   
\begin{example}
 Let $$ g_n(z):=
\frac{z}{1/n^2 + z^2}\, .$$
Clearly ${ g_n} \in  \MD$ and a straightforward computation leads to 
$$  g_{n}^{\sharp}(z) =  \frac{ |1/n^2 -  z^2|}{|1/n^2 + z^2|^2 + |z|^2} \ge \frac{1-1/n^2}{2+2/n^2+1/n^4}$$
and hence $g_n \in \mathcal{G}_c$ for any $c \in (0,1/2)$ for all but finitely
many $n$. 
 However it is readily checked that 
$ g_n(0) = 0$ but $$\lim\limits_{n\rightarrow \infty}z  g_n(z)  = 1$$ on the
punctured unit disk $\D\setminus\{0\}$. Hence none of the families
$\mathcal{G}_c$, $c \in  (0,1/2)$, is a normal family. 
\end{example}

\begin{remark}
There is another simple proof of the Grahl--Nevo normality criterion. Let us set 
$$\mathcal{F}_{c,0}:=\left\{f \in \mathcal{F}_c \,:  \,f(0) = 0\right\}\,.$$ 
It is clear that for any $ f \in  \mathcal{F}_c $ and any $z \in  \D ,$
\begin{equation} \label{eq:8}
  |f'(z)| \ge f^{\sharp}(z) \ge c ,
\end{equation}
and by the fundamental normality test \cite[p.~74]{Sc} the family consisting of
derivatives of functions  $ f \in  \mathcal{F}_c $ is a normal family; note
that this also follows from the plain fact that the family    $$ \left\{ {1\over f'} \, : \, f \in \mathcal{F}_c \right\} $$
 contains only functions
analytic in the unit disk and is also uniformly bounded above there by
(\ref{eq:8}). Now since for any $f$ in $\mathcal{F}_{c,0}$ we have
$$\int\limits_{0}^{z} f'(t) dt = f(z)\, ,$$ we obtain (see e.g.~Lemma 8 in \cite{GN}) that $\mathcal{F}_{c,0}$ is a normal family. We define for each 
$f \in \mathcal{F}_c $, $$ F(z) := \begin{cases} \displaystyle
  \frac{f(z)-f(0)}{1+\overline{f(0)}f(z)} & \text{ if } f(0) \in \C \\[6mm]
  \displaystyle \frac{1}{f(z)} & \text{ otherwise\,,}
  \end{cases}$$
  so $F$ belongs to $\mathcal{F}_{c,0}$. Now
let  $\{f_n \}$ be a sequence in $ \mathcal{F}_c $. Then  $F_n \in
\mathcal{F}_{c,0} $ for each $n$ 
and therefore  $\{F_{n}\}$  has a subsequence $\{F_{n_k}\}$ which converges uniformly on compact subsets of $\D$. If
$\{f_{n_k}(0)\}$ does not converge to the point at infinity, then  passing to
a further subsequence if necessary, we may assume that $\{f_{n_k}(0)\}
\subseteq \C$ and that $f_{n_k}(0) \to \alpha \in \C$. Hence the sequence 
$\{f_{n_k}\}$ is compactly convergent in $\D$  because  
$$  f_{n_k}(z) =\frac{F_{n_k}(z)+f_{n_k}(0)}{1-\overline{f_{n_k}(0)}F_{n_k}(z)}. $$
A similar line of reasoning is available if $\{f_{n_k}(0)\}$ does converge to
the point at infinity and we may therefore conclude that $ \mathcal{F}_c $ is a normal family.
\end{remark}

\begin{remark}
It is sometimes possible to give straightforward proofs of the normality of specific subclasses of $ \mathcal{F}_c $. Let $\mathcal{P}_n$ denote the class of complex polynomials of degree at most $n$ and  $\mathcal{R}_n$ the class of rational functions $f=p_n/q_n$ with $p_n \in \mathcal{P}_n$ and
 $$ q_n(z) = \prod \limits_{j=1}^{n} (z-z_{j})$$ where the points ${z_{j}}$ are
 fixed once for all with $|z_{j}|>1$ . We set $$||f||= \sup \limits_{z \in
   \partial \D}
 |f(z)|\, .$$ According to an estimate of Borwein and Erdelyi \cite{BE},
$$ |f'(z)| \le K(z)||f|| \quad \text{ for any }  |z|=1$$
with
$$ K(z)= \sum_{j=1}^{n} \frac{|z_{j}|^2 -1 }{|z_{j}-z|^2 }$$
and clearly  $$ K(z)\le \sum_{j=1}^{n} \frac{|z_{j}|+1 }{|z_{j}|-1 }:=k_n\,
.$$ In particular, if $ f\in  \mathcal{F}_c \cap \mathcal{R}_n $ and if $
|f(z_0)|=||f||$ for some $|z_0|=1$, then
$$ c \le \frac{|f'(z_0)|}{1+|f(z_0)|^2 } \le k_n \frac{|f(z_0)|}{1+|f(z_0)|^2 } .$$
It follows that $c/k_n\le 1/2$ and
$$ ||f||=|f(z_0)| \le \frac{k_n}{2c} \left(1+\sqrt{ 1-\frac{4c^2}{k_n^2}} \right)\,.$$ 
The family $\mathcal{F}_c \cap \mathcal{R}_n $ is therefore uniformly bounded on the unit disk and in particular normal there.
\end{remark}

{The authors would like to express their gratitude to an anonymous
     referee for his or her careful reading of this paper and many valuable suggestions.}


\bibliographystyle{amsplain}

\vfill

Richard Fournier\\
Department of Mathematics\\
University of Montreal\\
CP 6128, Succursale Centre-Ville\\
Montreal\\
H3C3J7 Canada\\
fournier@dms.umontreal.ca\\

Daniela Kraus \\
Department of Mathematics\\
University of W\"urzburg\\
Emil Fischer Stra{\ss}e 40\\
97074 W\"urzburg\\
Germany\\
dakraus@mathematik.uni-wuerzburg.de\\

 Oliver Roth\\
Department of Mathematics\\
University of W\"urzburg\\
Emil Fischer Stra{\ss}e 40\\
97074 W\"urzburg\\
Germany\\
roth@mathematik.uni-wuerzburg.de\\

\end{document}